\numberwithin{equation}{section}
\theoremstyle{plain}
\newtheorem{Th}{Theorem}[section]
\newtheorem{Lemma}[Th]{Lemma}
\newtheorem{question}[Th]{Question}
 \theoremstyle{definition}
\newtheorem{Def}[Th]{Definition}
\newtheorem{Fact}[Th]{Fact}
\newtheorem{Convention}[Th]{Convention}
\newcommand*{\rom}[1]{\expandafter\@slowromancap\romannumeral #1@}
\newcommand{\Q}{\mathbb{Q}}
\newcommand{\Z}{\mathbb{Z}}
\newcommand{\g}{\operatorname{Ad}^0\bar{\rho}}
\newcommand{\ad}{\operatorname{Ad}\bar{\rho}}
\newcommand{\GL}{\text{GL}}
\newcommand\op[1]{\operatorname{#1}}
\newcommand{\F}{\mathbb{F}}
\DeclareSymbolFont{cyrletters}{OT2}{wncyr}{m}{n}
\DeclareFontFamily{U}{wncy}{}
    \DeclareFontShape{U}{wncy}{m}{n}{<->wncyr10}{}
    \DeclareSymbolFont{mcy}{U}{wncy}{m}{n}
    \DeclareMathSymbol{\Sh}{\mathord}{mcy}{"58}
\title{Constructing Galois Representations Ramified at one prime}
\author{Anwesh Ray}
\begin{document}

\maketitle
\begin{abstract} Let $n>1$, $e\geq 0$ and a prime number $p\geq 2^{n+2+2e}+3$, such that the index of regularity of $p$ is $\leq e$. We show that there are infinitely many irreducible Galois representations $\rho: \op{Gal}(\bar{\Q}/\Q)\rightarrow \op{GL}_n(\Q_p)$ unramified at all primes $l\neq p$. Furthermore, these representations are shown to have image containing a fixed finite index subgroup of $\op{SL}_n(\Z_p)$. Such representations are constructed by lifting suitable residual representations $\bar{\rho}$ with image in the diagonal torus in $\op{GL}_n(\F_p)$, for which the global deformation problem is unobstructed.
\end{abstract}
\section{Introduction}
\par Let $p$ be an odd prime and $n$ an integer. There is much interest in the study of continuous Galois representations 
\[\rho:\op{G}_{\Q}\rightarrow \op{GL}_n(\bar{\Q}_p)\]which are \textit{geometric}, in the sense of Fontaine-Mazur (cf. \cite{FonaineMazur}). Prototypical examples include Galois representations associated to abelian varieties over $\Q$ and Siegel modular forms. The Galois representations associated to abelian varieties have $p$-adic monodromy $\op{GSp}_{2g}(\Q_p)$. On the other hand, Galois representations with big image in $\op{GL}_n(\Z_p)$ for $n>2$ are not expected to arise from automorphic forms. 
\par In this article we study the following question:
\begin{question}\label{question}
Let $p$ be a prime and $n>1$. Does there exist a continuous Galois representation $\rho:\op{G}_{\Q}\rightarrow \op{GL}_n(\Z_p)$ with suitably large image? If so, can one control the set of primes at which it may ramify?
\end{question}A parallel may be seen in the study of geometric Galois representations, where in the $\op{GL}_2$ case, the ramification may be controlled via Ribet's level lowering argument. For certain $(p,n)$, Greenberg systematically constructed Galois representations $\op{G}_{\Q,\{p\}}\rightarrow \GL_n(\Z_p)$ with image containing a finite index subgroup of $\op{SL}_n(\Z_p)$. Let $M$ be the maximal pro-$p$ extension of $\Q(\mu_p)$ which is unramified outside $p$. A theorem of Shafarevich shows that is $p$ is a regular prime, then $\op{Gal}(M/\Q)$ is a free pro-$p$ group with $\frac{p+1}{2}$ generators. Greenberg makes use of this to construct such Galois representations $\op{G}_{\Q,\{p\}}\rightarrow \GL_n(\Z_p)$ when $p$ is a regular prime greater than or equal to $4\lfloor n/2\rfloor +1$ (see \cite[Proposition 1.1]{greenberg}). Cornut-Ray \cite{raycornut} further generalized Greenberg's results to more general algebraic groups, without relaxing the regularity assumption on $p$. In \cite{tang}, Tang relaxed the regularity assumption of Greenberg, by constructing certain mod-$p$ representations which lift to characteristic zero when they are allowed to ramify at an auxiliary set of primes. This relies on deformation techniques pioneered by Ramakrishna \cite{Ram2,RaviFM}. Thus, Tang provides an affirmative answer to the first part of Question $\ref{question}$.
Our goal in this article is to control ramification. The residual representation is chosen to be unramified away from $p$ so that the associated global deformation problem is \textit{unobstructed}. This allows us to produce characteristic zero lifts without adding further ramification away from $p$. The construction in this paper is brief and self contained, relying only on well-known results in Galois cohomology. Let $e_p$ denote the index of regularity, see Definition $\ref{index}$.
\begin{Th}\label{main} Let $n>1$, $e\geq 0$ and $p$ be a prime number such that 
  \begin{enumerate}
      \item $p\geq  2^{n+2+2e}+3$,
      \item the index of regularity $e_p\leq e$.
  \end{enumerate}
      There are infinitely many continuous representations $\rho:\op{G}_{\Q,\{p\}}\rightarrow \op{GL}_n(\Z_p)$ such that the image of $\rho$ contains $\op{ker}\left(\op{SL}_n(\Z_p)\rightarrow \op{SL}_n(\Z/p^4)\right)$.
  \end{Th}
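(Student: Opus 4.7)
The plan is to realize $\bar\rho : \op{G}_{\Q,\{p\}} \to \op{GL}_n(\F_p)$ as a direct sum
\[
\bar\rho = \omega^{a_1} \oplus \cdots \oplus \omega^{a_n}
\]
of powers of the mod-$p$ cyclotomic character $\omega$, with exponents $a_1, \dots, a_n \in \Z/(p-1)\Z$ chosen so that the deformation problem for $\bar\rho$ on $\op{G}_{\Q,\{p\}}$ is \emph{unobstructed}, i.e.\ $H^2(\op{G}_{\Q,\{p\}}, \ad) = 0$. Since $\ad$ decomposes as $\bigoplus_{i,j} \omega^{a_i - a_j}$, the cohomology splits over character summands, and the problem reduces to arranging $H^2(\op{G}_{\Q,\{p\}}, \omega^k) = 0$ for each difference $k = a_i - a_j$.

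The cohomological input is Poitou--Tate duality. For $k \not\equiv 0, 1 \pmod{p-1}$ the local $H^2(\Q_p, \omega^k)$ vanishes, so the global $H^2(\op{G}_{\Q,\{p\}}, \omega^k)$ is Pontryagin dual to $\Sh^1(\op{G}_{\Q,\{p\}}, \omega^{1-k})$, which by Kummer theory and Herbrand--Ribet is nontrivial precisely when $k$ is an irregular index; the number of such $k$ is $e_p \leq e$. The exceptional residues $k \equiv 0, 1 \pmod{p-1}$ can be handled directly via Kummer theory and the global Euler characteristic formula, both contributions in fact being trivial. Thus the ``bad'' set $B \subset \Z/(p-1)\Z$ for which $H^2(\op{G}_{\Q,\{p\}}, \omega^k) \neq 0$ has size bounded linearly in $e$, and a combinatorial counting argument using the hypothesis $p \geq 2^{n+2+2e}+3$ produces $n$-tuples $(a_1, \dots, a_n)$ of distinct residues such that every pairwise difference $a_i - a_j$ (for $i \neq j$) avoids $B$ and such that both parities are represented among the $a_i$; the latter condition plays a crucial role in the image argument below.

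For each such $\bar\rho$, unobstructedness inductively lifts $\bar\rho$ to a compatible system $\rho_k : \op{G}_{\Q,\{p\}} \to \op{GL}_n(\Z/p^k)$, giving in the limit $\rho : \op{G}_{\Q,\{p\}} \to \op{GL}_n(\Z_p)$; at each step the set of lifts is a torsor under $H^1(\op{G}_{\Q,\{p\}}, \ad) \otimes \F_p$, so the family of $\rho$ thus obtained is parametrized by a smooth $\Z_p$-analytic space of positive dimension $h = \dim_{\F_p} H^1(\op{G}_{\Q,\{p\}}, \ad)$, supplying infinitely many such $\rho$. The main obstacle will be the image assertion: to secure $\ker\bigl(\op{SL}_n(\Z_p) \to \op{SL}_n(\Z/p^4)\bigr)$ inside $\rho(\op{G}_{\Q,\{p\}})$, I would restrict $\rho$ to $\op{Gal}(\bar\Q/\Q(\mu_p))$, where $\bar\rho$ trivializes and the image lies in the pro-$p$ group $1 + p\M_n(\Z_p)$, corresponding via the $p$-adic logarithm to a closed $\Z_p$-Lie subalgebra $L \subset p\M_n(\Z_p)$. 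The global Euler characteristic formula yields $h^1(\omega^k) = 1$ for each odd $k$ outside $B$, providing a nontrivial deformation class in each such isotypic summand of $\ad$; commutators of these classes reach the remaining even-difference positions (whose weights sum appropriately), and tracking convergence carefully through the Baker--Campbell--Hausdorff formula yields $L \supset p^4 \mathfrak{sl}_n(\Z_p)$, whence the desired containment.
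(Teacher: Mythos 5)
Your overall strategy coincides with the paper's: a diagonal residual representation built from powers of the mod-$p$ cyclotomic character, unobstructedness via local/global duality reducing to vanishing of class-group eigenspaces $\mathcal{C}(\bar{\chi}^{1-k})$, a combinatorial choice of exponents permitted by $e_p\leq e$, and a commutator argument for the image. However, there are two genuine gaps.

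First, the combinatorics. You only ask that each difference $a_i-a_j$ avoid the bad set $B$; but to deduce from ``$\Phi_1(\rho)$ has nonzero $e_{i,j}$-component'' that $e_{i,j}$ itself lies in $\Phi_1(\rho)$, one decomposes the Galois module $\op{Ad}\bar{\rho}$ into isotypic pieces, and this isolates the individual root vectors only if the characters $\bar{\chi}^{a_i-a_j}$, $i\neq j$, are \emph{pairwise distinct} (and nontrivial, and $\neq\bar{\chi}$). This Sidon-type condition on the exponents is the real reason for the exponential hypothesis $p\geq 2^{n+2+2e}+3$: the paper takes $m_j=2^{j+1}+\epsilon_j$ precisely so that all pairwise differences are distinct, and then discards at most $2e$ indices to dodge the irregular eigenspaces. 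A counting argument that merely avoids a set $B$ of size $O(ne)$ would need far fewer residues, which is a sign that the distinctness requirement is missing from your argument.

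Second, and more seriously, the image. The existence of a nonzero class in $H^1(\op{G}_{\Q,\{p\}},\F_p(\bar{\chi}^{a_i-a_j}))$ for each odd difference does not by itself force \emph{your} lift $\rho$ to realize it: the unobstructed lifting procedure applied to the obvious diagonal lift produces a representation with abelian image, for which $\Phi_1(\rho)$ contains no off-diagonal root vectors and the conclusion fails. One must build the classes into the lift; the paper does this by twisting the diagonal mod-$p^2$ lift by $\op{Id}+pF$ with $F=\sum f_{i,j}$ (the $f_{i,j}$ being the generators of the one-dimensional $H^1$'s for $i+j$ odd), and only then invokes unobstructedness to continue to $\Z_p$. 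Once that is done, your logarithm/BCH endgame is a plausible substitute for the paper's filtration lemmas ($[\Phi_l(\rho),\Phi_m(\rho)]\subseteq\Phi_{l+m}(\rho)$, leading to $\Phi_4(\rho)\supseteq\g$), but as written it is only a sketch, and it also omits where the regular semisimple diagonal element (the image of $\gamma$ with $\chi(\gamma)=1+p$, giving $w=\sum a_ie_{i,i}$ with distinct entries) enters; the paper uses $w$ to propagate the root vectors from $\Phi_k$ to $\Phi_{k+1}$. Your ``infinitely many'' via the positive-dimensional $H^1$-torsor is a legitimate alternative to the paper's variation of the determinant character $\psi$.
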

  It is noted, for instance in \cite{buhler}, that if the numerators of Bernoulli numbers are uniformly random modulo odd primes, then the density of irregular primes with index of regularity equal to $ r$ should equal $e^{-1/2} /(2^r r!)$. This heuristic is supported by evidence, indeed, it is shown in \textit{loc. cit.} that among the first million primes, the highest index of regularity observed is $6$, and the only prime less than a million with index of regularity equal to $6$ is $527377$. The density $e^{-1/2} /(2^r r!)$ drops rather fast. The density of regular primes is expected to be $e^{-1/2}=0.60653065...$, whilst the density of irregular primes with index of regularity $6$ is expected to be $0.00001316...$. In \cite{hart}, it is shown that the maximum regularity index for primes $p<2^{31}$ is $9$. Also note that it is known that Vandiver's conjecture is satisfied for all primes less than $2^{19}$. Specializing the above to primes less than $2^{31}$, we have the following:
  \begin{Th}
    Let $n$ be such that $2\leq n\leq 10$ and $p$ a prime such that $2^{n+20}<p< 2^{31}$. 
    There are infinitely many continuous representations $\rho:\op{G}_{\Q,\{p\}}\rightarrow \op{GL}_n(\Z_p)$ such that the image of $\rho$ contains $\op{ker}\left(\op{SL}_n(\Z_p)\rightarrow \op{SL}_n(\Z/p^4)\right)$.
  \end{Th}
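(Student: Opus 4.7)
The plan is to deduce this statement as a direct corollary of Theorem \ref{main} by choosing the parameter $e$ to accommodate all primes below $2^{31}$ uniformly. Specifically, I would set $e := 9$, so that the two numerical hypotheses of Theorem \ref{main} specialize to $p \geq 2^{n+20}+3$ and $e_p \leq 9$.

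First, I would verify hypothesis (2) of Theorem \ref{main}. The computational result of Hart \cite{hart} cited in the paragraph preceding the statement asserts that every prime $p < 2^{31}$ has index of regularity $e_p \leq 9$, so this hypothesis is automatic for the primes under consideration. Next I would check hypothesis (1). Given the assumption $p > 2^{n+20}$, one has $p \geq 2^{n+20}+1$ a priori; since $p$ is odd it also cannot equal the even integer $2^{n+20}+2$, so the only potentially troublesome case is $p = 2^{n+20}+1$, in which the exponent $n+20$ ranges over $\{22,\dots,30\}$. None of these is a power of $2$, and since $2^m+1$ can be prime only when $m$ is a power of $2$, each candidate is composite; hence in fact $p \geq 2^{n+20}+3$, as required.

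With both hypotheses verified, Theorem \ref{main} directly produces the desired infinitely many continuous representations $\rho: \op{G}_{\Q,\{p\}} \rightarrow \op{GL}_n(\Z_p)$ whose image contains $\op{ker}\left(\op{SL}_n(\Z_p) \to \op{SL}_n(\Z/p^4)\right)$. The key input is really just the unconditional bound $e_p \leq 9$ for primes below $2^{31}$ from \cite{hart}; there is no genuine mathematical obstacle beyond the tools already developed for Theorem \ref{main}. The only mildly delicate step is the off-by-two discrepancy between the informal hypothesis $p > 2^{n+20}$ in the statement and the sharper $p \geq 2^{n+20}+3$ demanded by Theorem \ref{main}, which is handled by the Fermat-prime observation above; if one preferred, one could instead state the theorem with $p \geq 2^{n+20}+3$ and bypass this discussion entirely.
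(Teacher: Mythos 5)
Your proposal is correct and is exactly the specialization the paper intends: take $e=9$ in Theorem \ref{main}, so that $2^{n+2+2e}=2^{n+20}$, and invoke the bound $e_p\leq 9$ for $p<2^{31}$ from \cite{hart}. The paper gives no explicit proof, and your careful handling of the gap between $p>2^{n+20}$ and $p\geq 2^{n+20}+3$ (ruling out $p=2^{n+20}+1$ since $n+20\in\{22,\dots,30\}$ is not a power of $2$) is a worthwhile detail the paper glosses over.
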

  When $n$ and $p$ are specified, it is indeed possible to check if the method in this article can be used to construct a Galois representation. One may try and construct the sequence $k_1,\dots, k_n$ satisfying the properties of Theorem $\ref{main2}$. However, the author was not able to realize a more refined statement which applies in suitable generality, than Theorem $\ref{main}$. It is natural to ask if the results in this manuscript can be generalized to split reductive algebraic groups over $\Z_p$, as is done in \cite{raycornut,tang}. The author intentionally chooses a less general framework in which the inherent simplicity of the underlying ideas come across easily.
  \subsection*{Acknowledgements}
The author would like to thank Ravi Ramakrishna for helpful conversations. The author is very grateful to the anonymous referee for her/his attentive and timely reading of the original submission.
\section{The global deformation problem}
\par In this section, we introduce some preliminary notions. Fix an odd prime $p$ and a number $n>1$. For each prime number $l$, denote by $\op{G}_{\Q_l}$ the absolute Galois group of $\Q_l$. Choosing an embedding $\bar{\Q}\rightarrow \bar{\Q}_l$, we have an inclusion $\op{G}_{\Q_l}\hookrightarrow \op{G}_{\Q}$ of Galois groups. Denote by $\chi$ the $p$-adic cyclotomic character and $\bar{\chi}$ its mod-$p$ reduction. For $m\geq 1$, denote by $\mathcal{U}_m\subseteq \op{SL}_n(\Z_p)$ the kernel of the mod-$p^m$ reduction map.
\par Fix a sequence of integers $k_1,k_2,\dots, k_n$ and set $\bar{\rho}$ to denote the mod-$p$ Galois representation which is a direct sum of characters $\bar{\chi}^{k_1}\oplus\dots\oplus\bar{\chi}^{k_n}$. In other words, we have the residual representation
\[\bar{\rho}=\left( {\begin{array}{cccc}
   \bar{\chi}^{k_1} & & & \\
    & \bar{\chi}^{k_2}& & \\
    & & \ddots & \\
    & & & \bar{\chi}^{k_{n}}
  \end{array} } \right):\op{G}_{\Q,\{p\}}\rightarrow \op{GL}_n(\F_p).\]
  In order to produce characteristic zero lifts of $\bar{\rho}$ with big image, we study the deformations of $\bar{\rho}$. For an introduction to the deformation theory of Galois representations, the reader may consult \cite{Mazur}.
  \par For a local ring $R$ with maximal ideal $\mathfrak{m}_R$, let $\widehat{\op{GL}_n}(R)$ be the group
  \[\widehat{\op{GL}_n}(R):=\op{ker}\left\{\op{GL}_n(R)\xrightarrow{\op{mod}\mathfrak{m}_R} \op{GL}_n(R/\mathfrak{m}_R)\right\}.\]
\begin{Def}Let $m$ be an integer such that $m\geq 1$. A mod-$p^m$ lift of $\bar{\rho}$ is a continuous homomorphism $\rho_m: \op{G}_{\Q}\rightarrow \op{GL}_n(\Z/p^m)$ such that $\bar{\rho}=\rho_m\mod{p}$. Two lifts $\rho_m, \rho_m': \op{G}_{\Q}\rightarrow \op{GL}_n(\Z/p^m)$ of $\bar{\rho}$ are \textit{strictly equivalent} if $\rho_m'=A\rho_m A^{-1}$ for some matrix $A\in \widehat{\op{GL}_n}(\Z/p^m)$.
A \textit{deformation} is a strict equivalence class of lifts.
\end{Def}
It was shown by Mazur that the global deformation functors associated to absolutely irreducible mod-$p$ Galois representations are indeed representable by \textit{universal deformations}. In this article, $\bar{\rho}$ is far from irreducible. We adopt a step by step lifting approach which does not rely on the existence of a universal deformation. 
\par Let $\tau$ denote the determinant of $\bar{\rho}$ and $\tilde{\tau}:\op{G}_{\Q,\{p\}}\rightarrow \op{GL}_1(\Z_p)$ denote the Teichm\" uller lift of $\tau$. For any character $\phi:\op{G}_{\Q}\rightarrow \op{GL}_1(\Z_p)$, let $\phi_m$ denote the mod-$p^m$ reduction of $\phi$. When there is no cause for confusion, we shall simply use $\phi$ in place of $\phi_m$. Fix a character $\psi$ which is unramified outside $\{p\}$ such that $\psi_2=(\tilde{\tau})_2$. For instance, $\psi$ can be taken to be $\tilde{\tau}\chi^{p^2-p}$.
\begin{Convention}Let us note once and for all that all deformations of $\bar{\rho}$ are stipulated to have determinant equal to $\psi$.
\end{Convention}
\par In order to prove Theorem $\ref{main}$, it is shown that for a suitable choice of $k_1,\dots, k_{n}$ it is shown that $\bar{\rho}$ lifts to a characteristic zero irreducible representation which is unramified at all primes $l\neq p$. 
 \begin{enumerate}
     \item First, it is shown that there is a mod-$p^5$ lift $\rho_5: \op{G}_{\Q,\{p\}}\rightarrow \op{GL}_n(\Z/p^5)$ such that the image of $\rho_5$ contains \[\op{ker}\left\{\op{SL}_n(\Z/p^5)\rightarrow \op{SL}_n(\Z/p^4)\right\}. \]
     \item Next, it is shown that the (unramified outside $\{p\}$) infinitesimal lifting problem is unobstructed. This implies that any mod-$p^m$ deformation
     \[\rho_{m}: \op{G}_{\Q, \{p\}}\rightarrow \op{GL}_n(\Z/p^m)\] of $\bar{\rho}$ lifts one more step as depicted:
     \[\begin{tikzpicture}[node distance = 2.0 cm, auto]
      \node (GSX) at (0,0){$\operatorname{G}_{\Q,\{p\}}$};
      \node (GL2) at (5,0){$\op{GL}_n(\F_p).$};
      \node (GL2Wn) at (3,2)[above of= GL2]{$\op{GL}_n(\Z/p^{m})$};
      \node (GL2Wnplus1) at (5,4){$\op{GL}_{n}(\Z/p^{m+1})$};
      \draw[->] (GSX) to node [swap]{$\bar{\rho}$} (GL2);
      \draw[->] (GL2Wn) to node {} (GL2);
      \draw[->] (GSX) to node [swap]{$\rho_m$} (GL2Wn);
      \draw[->] (GL2Wnplus1) to node {} (GL2Wn);
      \draw[dashed,->] (GSX) to node {$\rho_{m+1}$} (GL2Wnplus1);
      \end{tikzpicture}\] It follows that $\rho_5$ lifts to a characteristic zero continuous representation $\rho:\op{G}_{\Q,\{p\}}\rightarrow \op{GL}_n(\Z_p)$.
\item It is shown that the image of ${\rho}$ contains $\mathcal{U}_4$.
 \end{enumerate}
\par Let us describe the infinitesimal deformation problem.
\begin{Def}\label{adgaloisaction}Set $\ad$ to denote the Galois module whose underlying vector space consists of $n\times n$ matrices with entries in $\F_p$. Let $\g$ be the Galois stable submodule of trace zero matrices. The Galois action is as follows: for $g\in \op{G}_{\Q,\{p\}}$ and $v\in \ad$, 
     set $g\cdot v:=\bar{\rho}(g) v \bar{\rho}(g)^{-1}$.
     \end{Def}The module $\ad$ is equipped with a Lie bracket $[X,Y]:=XY-YX$. The underlying vector space of $\ad$ (resp. $\g$) is the Lie algebra of $\op{GL}_{n/\F_p}$ (resp. $\op{SL}_{n/\F_p}$). Let $e_{i,j}\in \g$ denote the matrix with $1$ at the $(i,j)$ entry and $0$ at all other entries. The adjoint Galois-module $\g$ is a direct sum 
     \[\g=\mathfrak{t}\oplus\left(\bigoplus_{(i,j),
     i\neq j}\F_p(\bar{\chi}^{k_i-k_j})\right), \]where $\mathfrak{t}$ is the submodule of diagonal matrices and the sum runs over $(i,j)$ with $i\neq j$. The Galois action on $\mathfrak{t}$ is trivial.
     \par Suppose that $m\geq 1$ and $\rho_{m}:\op{G}_{\Q,\{p\}}\rightarrow \op{GL}_n(\Z/p^{m})$ is a deformation of $\bar{\rho}$. A continuous lift (not necessarily a homomorphism) $\varrho: \op{G}_{\Q,\{p\}}\rightarrow \op{GL}_n(\Z/p^{m+1})$ of $\rho_{m}$ with determinant $\psi_{m+1}$ does always exist. Identify $\g$ with the kernel of the mod-$p^m$ map $\op{SL}_n(\Z/p^{m+1})\rightarrow \op{SL}(\Z/p^m)$ by associating a vector $X\in \g$ with $\op{Id}+p^m X$. Let $\mathcal{O}(\rho_{m})$ be the cohomology class in $H^2(\op{G}_{\Q,\{p\}}, \g)$ defined by the $2$-cocycle 
     \[(g,h)\mapsto\varrho(gh)\varrho(h)^{-1} \varrho(g)^{-1}.\]The associated cohomology class $\mathcal{O}(\rho_{m})$ so defined is independent of the lift $\varrho$. The following is easy to check.
     \begin{Fact} A mod-$p^{m}$ deformation $\rho_{m}$ does lift one more step to a Galois representations which is unramified outside $\{p\}$
     \[\rho_{m+1}:\op{G}_{\Q,\{p\}}\rightarrow \op{GL}_n(\Z/p^{m+1}) \]if and only if $\mathcal{O}(\rho_m)=0$.
     \end{Fact}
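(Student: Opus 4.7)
The proof is a standard obstruction-theoretic computation, so my plan is to verify that the cohomology class $\mathcal{O}(\rho_m)$ is well-defined and that its vanishing is equivalent to the existence of a homomorphism lift in the prescribed deformation class.

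First I would fix a continuous set-theoretic lift $\varrho: \op{G}_{\Q,\{p\}}\to \op{GL}_n(\Z/p^{m+1})$ of $\rho_m$ with $\det\varrho=\psi_{m+1}$ (whose existence is granted in the excerpt). I would define $c(g,h):=\varrho(gh)\varrho(h)^{-1}\varrho(g)^{-1}$ and check that $c(g,h)$ reduces to the identity modulo $p^m$, so that under the identification $\op{Id}+p^m X\leftrightarrow X$ it takes values in the kernel of $\op{SL}_n(\Z/p^{m+1})\to\op{SL}_n(\Z/p^m)$; the determinant condition on $\varrho$ forces $\det c(g,h)=1$, so that $c$ lands in $\mathfrak{g}=\g$ rather than merely in $\ad$. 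A direct computation using the associativity identity $\varrho((gh)k)=\varrho(g(hk))$ shows that $c$ is a $2$-cocycle for the adjoint action of $\bar{\rho}$.

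Next I would verify independence of $\varrho$. Any other lift with the same determinant has the form $\varrho'(g)=(\op{Id}+p^m\eta(g))\varrho(g)$ for a continuous $1$-cochain $\eta:\op{G}_{\Q,\{p\}}\to\g$, and plugging this into the cocycle formula and expanding modulo $p^{m+1}$ shows that $c'-c=\partial\eta$, so $[c]\in H^2(\op{G}_{\Q,\{p\}},\g)$ depends only on $\rho_m$. Continuity of the resulting class in group cohomology is guaranteed because $\varrho$ can be chosen continuous; this also ensures that we work in the continuous cohomology of $\op{G}_{\Q,\{p\}}$, which automatically enforces the ``unramified outside $\{p\}$'' condition on any resulting lift.

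For the forward implication, if a homomorphism lift $\rho_{m+1}$ with the required determinant exists, I would simply take $\varrho=\rho_{m+1}$, so that $c\equiv\op{Id}$ and $\mathcal{O}(\rho_m)=0$. For the converse, assuming $\mathcal{O}(\rho_m)=0$ I would write $c=\partial\eta$ for a continuous $1$-cochain $\eta$ with values in $\g$, and set $\rho_{m+1}(g):=(\op{Id}-p^m\eta(g))\varrho(g)$. An expansion modulo $p^{m+1}$ then shows $\rho_{m+1}$ is a homomorphism, it reduces to $\rho_m$ modulo $p^m$, and since $\eta$ takes values in the trace-zero submodule $\g$, its determinant is unchanged from $\det\varrho=\psi_{m+1}$; continuity of $\eta$ makes $\rho_{m+1}$ continuous, hence it factors through $\op{G}_{\Q,\{p\}}$.

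The only minor subtlety I anticipate is bookkeeping the determinant: one must start the set-theoretic lift with the correct determinant $\psi_{m+1}$ so that the $2$-cocycle lands in $\g$ rather than $\ad$, and one must coboundary-correct by a cochain valued in $\g$ so that the determinant is preserved. Once this is kept in mind, the rest is a routine verification, which is why the excerpt labels it a Fact.
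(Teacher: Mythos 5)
Your argument is correct and is precisely the standard obstruction-theoretic computation that the paper has in mind; the paper itself offers no proof, simply labelling the statement a Fact preceded by ``The following is easy to check.'' You correctly handle the one genuine subtlety (choosing the set-theoretic lift with determinant $\psi_{m+1}$ so that the cocycle and the correcting cochain take values in $\g$ rather than $\ad$, which is what makes the twisted lift again a deformation with the stipulated determinant), so nothing is missing.
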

     The next fact states that the set of deformations $\rho_{m+1}$ of $\rho_{m}$ have the structure of an $H^1(\op{G}_{\Q,\{p\}}, \g)$-pseudotorsor.
     \begin{Fact}Suppose that there exist deformations $\rho_{m+1},\rho_{m+1}':\op{G}_{\Q,\{p\}}\rightarrow \op{GL}_n(\Z/p^{m+1})$ of $\rho_{m}$. Then there is a unique class $h\in H^1(\op{G}_{\Q,\{p\}}, \g)$ such that \[\rho_{m+1}'=(\op{Id}+p^{m} h)\rho_{m+1}.\]
    
     \end{Fact}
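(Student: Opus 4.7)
The plan is to produce $h$ explicitly from the two given lifts, check that it is a continuous $1$-cocycle taking values in $\g$, and then extract uniqueness directly from the defining equation.

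First, I would define the function $h:\op{G}_{\Q,\{p\}}\to \ad$ by the relation $\rho_{m+1}'(g)\rho_{m+1}(g)^{-1}=\op{Id}+p^{m}h(g)$. This is well-defined because both lifts reduce to $\rho_{m}$ modulo $p^{m}$, so the product on the left lies in the kernel of $\op{GL}_n(\Z/p^{m+1})\to \op{GL}_n(\Z/p^{m})$, which is identified with $\ad$ via $X\mapsto \op{Id}+p^{m}X$. Since both lifts have determinant equal to $\psi_{m+1}$, expanding $\det(\op{Id}+p^{m}X)\equiv 1+p^{m}\op{tr}(X)\pmod{p^{m+1}}$ forces $\op{tr}(h(g))=0$, so $h$ in fact takes values in $\g$. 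Continuity of $h$, and the fact that it factors through $\op{G}_{\Q,\{p\}}$, are both inherited directly from the corresponding properties of $\rho_{m+1}$ and $\rho_{m+1}'$.

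Next I would verify the $1$-cocycle identity. Using that $\rho_{m+1}'$ is a homomorphism and substituting the definition of $h(g')$, one computes
\[\op{Id}+p^{m}h(gg')=\rho_{m+1}'(g)\rho_{m+1}(g)^{-1}+p^{m}\rho_{m+1}'(g)h(g')\rho_{m+1}(g)^{-1}.\]
The second summand already carries the factor $p^{m}$, so modulo $p^{m+1}$ the outer matrices may be replaced by any lift of $\bar{\rho}(g)$; this yields $h(gg')=h(g)+\bar{\rho}(g)h(g')\bar{\rho}(g)^{-1}=h(g)+g\cdot h(g')$, which is precisely the cocycle condition for the Galois action of Definition~\ref{adgaloisaction}.

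Uniqueness of the class is immediate from the defining equation. If some $h'$ also satisfied $\rho_{m+1}'=(\op{Id}+p^{m}h')\rho_{m+1}$, then multiplying both equations on the right by $\rho_{m+1}(g)^{-1}$ would yield $\op{Id}+p^{m}h(g)=\op{Id}+p^{m}h'(g)$ for every $g$, hence $h=h'$ as cocycles (and \emph{a fortiori} as classes in $H^1$). No step in this argument presents a genuine obstacle; the only care required is in tracking which terms vanish modulo $p^{m+1}$ by virtue of the extra factor of $p^{m}$ that is already present.
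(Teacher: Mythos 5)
Your argument is correct and is the standard verification; the paper states this Fact without proof, so the comparison is simply that you have supplied the computation the author calls easy to check. Defining $h$ by $\rho_{m+1}'(g)\rho_{m+1}(g)^{-1}=\op{Id}+p^{m}h(g)$, extracting $\op{tr}h=0$ from the common determinant $\psi_{m+1}$, and reducing the conjugating matrices modulo $p$ inside the term that already carries the factor $p^{m}$ to obtain $h(gg')=h(g)+g\cdot h(g')$ is exactly the intended argument, and each step is carried out correctly. The one point worth tightening is that the Fact speaks of \emph{deformations} and of a \emph{class} in $H^1(\op{G}_{\Q,\{p\}},\g)$, whereas your construction and your uniqueness argument live at the level of lifts and cocycles. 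To justify the word ``class'' one should also record that replacing $\rho_{m+1}'$ by a strictly equivalent lift $A\rho_{m+1}'A^{-1}$ that still literally reduces to $\rho_m$, i.e.\ with $A=\op{Id}+p^{m}a$, changes the cocycle $h$ by the coboundary $g\mapsto a-g\cdot a$; hence the cohomology class is well defined on, and uniquely determined by, the pair of deformations rather than the chosen representatives. That is a one-line computation of the same kind as your cocycle check, and with it your proof is complete.
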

We say that the "unramified outside $\{p\}$"  deformation problem for $\bar{\rho}$ is \textit{unobstructed} if $H^2(\op{G}_{\Q,\{p\}}, \g)$ is equal to zero. For future reference, we take note of the following, which follows from the previous discussion.
\begin{Lemma}\label{lemma25}
Suppose that $H^2(\op{G}_{\Q,\{p\}}, \g)=0$ and suppose that we are given a deformation $\rho_m:\op{G}_{\Q,\{p\}}\rightarrow \op{GL}_n(\Z/p^m)$ of $\bar{\rho}$. There exists a deformation \[\rho: \op{G}_{\Q,\{p\}}\rightarrow \op{GL}_n(\Z_p)\] such that $\rho_m=\rho\mod{p^m}$.
\end{Lemma}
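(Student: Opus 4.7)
The strategy is to iterate the step-by-step lifting afforded by the vanishing of $H^2(\op{G}_{\Q,\{p\}},\g)$, and then pass to the inverse limit to obtain a characteristic zero representation.

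First, I would invoke the obstruction-theoretic Fact recalled just above: for any deformation $\rho_k:\op{G}_{\Q,\{p\}}\to \op{GL}_n(\Z/p^k)$, the obstruction to extending to an unramified-outside-$\{p\}$ deformation $\rho_{k+1}:\op{G}_{\Q,\{p\}}\to \op{GL}_n(\Z/p^{k+1})$ with prescribed determinant $\psi_{k+1}$ is the class $\mathcal{O}(\rho_k)\in H^2(\op{G}_{\Q,\{p\}},\g)$. Since this group vanishes by hypothesis, a lift $\rho_{k+1}$ exists for every such $\rho_k$.

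Armed with this, I would set up the induction starting from the given $\rho_m$: inductively define a sequence $(\rho_k)_{k\geq m}$ of deformations $\rho_k:\op{G}_{\Q,\{p\}}\to \op{GL}_n(\Z/p^k)$ by choosing, at each stage, an arbitrary lift $\rho_{k+1}$ of $\rho_k$ as above. By construction, $\rho_{k+1}\bmod p^k=\rho_k$ for each $k\geq m$, so the sequence is compatible with the transition maps $\op{GL}_n(\Z/p^{k+1})\to \op{GL}_n(\Z/p^k)$.

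Finally, using the topological identification $\op{GL}_n(\Z_p)=\varprojlim_k \op{GL}_n(\Z/p^k)$, the compatible system $(\rho_k)_{k\geq m}$ assembles, via the universal property of the inverse limit, into a continuous homomorphism $\rho:\op{G}_{\Q,\{p\}}\to \op{GL}_n(\Z_p)$ satisfying $\rho\bmod p^m=\rho_m$. Continuity is automatic: a homomorphism into a profinite group is continuous if and only if each of its mod-$p^k$ reductions is, and each $\rho_k$ was chosen to be continuous. There is no serious obstacle beyond bookkeeping; the only point requiring attention is that each inductive lift is a genuine homomorphism (not merely a set-theoretic lift), which is precisely what the vanishing of $\mathcal{O}(\rho_k)$ guarantees.
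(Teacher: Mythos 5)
Your proposal is correct and is exactly the argument the paper intends: the paper states that Lemma \ref{lemma25} ``follows from the previous discussion,'' namely iterating the vanishing of the obstruction class $\mathcal{O}(\rho_k)\in H^2(\op{G}_{\Q,\{p\}},\g)$ to lift one power of $p$ at a time and then assembling the compatible system via $\op{GL}_n(\Z_p)=\varprojlim_k \op{GL}_n(\Z/p^k)$. No discrepancy to report.
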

In the next section, appropriate choices of $\bar{\rho}$ are shown to be unobstructed outside $\{p\}$ in the sense described. The results proven in the remainder of this section are used in showing that the characteristic zero lifts thus constructed do indeed have big image in $\op{SL}_n(\Z_p)$. Suppose that $\rho:\op{G}_{\Q,\{p\}}\rightarrow \op{GL}_n(\Z_p)$ is a lift of $\bar{\rho}$. For $m\geq 1$, set $\rho_m$ to be the mod-$p^m$ reduction $\rho\mod{p^m}$. 
\begin{Def}For $m\geq 1$, set $\Phi_m(\rho):=\rho_{m+1} (\op{ker}\rho_{m})$. Note that $\Phi_m(\rho)$ is isomorphic to $\op{ker} \rho_m /\op{ker} \rho_{m+1}$ and shall be viewed as a submodule of $\ad$. Here, $\op{Id}+p^m v=\rho_{m+1}(g)$ for $g\in \op{ker}\rho_m$, is identified with $v\in \ad$.
\end{Def}
 Recall from Definition $\ref{adgaloisaction}$ that the Galois action on $\op{Ad}\bar{\rho}$ is from composing $\bar{\rho}$ with the adjoint action. Note that if $\sigma\in \op{G}_{\Q,\{p\}}$ and $v\in \op{Ad}\bar{\rho}$, then 
 \[\rho_{m+1}(\sigma) (\op{Id}+p^m v)\rho_{m+1}(\sigma)^{-1}=(\op{Id}+p^m \bar{\rho}(\sigma)v\bar{\rho}(\sigma)^{-1})=(\op{Id}+p^m (\sigma\cdot v)).\]It is easy to check that since $\op{ker} \rho_{m}$ is a normal subgroup of $\GL_n(\Z_p)$, it follows that $\Phi_m(\rho)\subseteq \ad$ is a Galois-stable submodule. Recall that the determinant of $\rho$ is stipulated to be equal to the character $\psi$, which is chosen to be congruent to $\tilde{\tau}$ modulo-$p^2$. As a result, for $g\in \op{ker}\bar{\rho}$, it follows that $\op{det}\rho_2(g)=1$. Writing $\rho_2(g)=\op{Id}+pX$, we have that 
\[\op{det} \rho_2(g)=1+p\op{tr} X,\]and hence $\op{tr}X=0$ in $\Z/p\Z$. It thus follows that $\Phi_1(\rho)\subseteq \g$.
 \begin{Lemma} \label{lemma26}
     Let $\rho$ be as above. For $l,m\geq 1$, we have that $[\Phi_l(\rho), \Phi_m(\rho)]\subseteq \Phi_{l+m}(\rho)$.
     \end{Lemma}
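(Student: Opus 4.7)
The plan is to unwind the definition of $\Phi_j(\rho)$ and express the Lie bracket on the module side as the commutator of group elements on the Galois side, then show that commutators of elements in $\ker\rho_l$ and $\ker\rho_m$ actually land in $\ker\rho_{l+m}$, with a description modulo $p^{l+m+1}$ that matches $[X,Y]$.

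Concretely, I would pick arbitrary $X\in\Phi_l(\rho)$ and $Y\in\Phi_m(\rho)$ and lift them to group elements $g\in\ker\rho_l$ and $h\in\ker\rho_m$ so that $\rho(g)=\mathrm{Id}+p^l A$ and $\rho(h)=\mathrm{Id}+p^m B$ in $\op{GL}_n(\Z_p)$, with $A\equiv X$ and $B\equiv Y$ modulo $p$. The key computation is the commutator identity
\begin{equation*}
\rho(g)\rho(h)=\rho(h)\rho(g)+p^{l+m}[A,B],
\end{equation*}
from which I would deduce, by multiplying on the right by $\rho(g)^{-1}\rho(h)^{-1}$, that
\begin{equation*}
\rho(ghg^{-1}h^{-1})=\mathrm{Id}+p^{l+m}[A,B]\rho(g)^{-1}\rho(h)^{-1}.
\end{equation*}
Since $\rho(g)^{-1}$ and $\rho(h)^{-1}$ are both congruent to $\mathrm{Id}$ modulo $p$ (because $l,m\geq 1$), reducing the right-hand side modulo $p^{l+m+1}$ gives $\mathrm{Id}+p^{l+m}[A,B]$.

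This shows that $ghg^{-1}h^{-1}\in\ker\rho_{l+m}$ and that, under the identification $\mathrm{Id}+p^{l+m}v\leftrightarrow v$, its image in $\Phi_{l+m}(\rho)$ is $[A,B]\bmod p=[X,Y]$. Hence every elementary bracket $[X,Y]$ lies in $\Phi_{l+m}(\rho)$. Since $\Phi_{l+m}(\rho)$ is already known to be a Galois-stable submodule of $\ad$ (in particular an abelian subgroup closed under addition), this implies the full containment $[\Phi_l(\rho),\Phi_m(\rho)]\subseteq\Phi_{l+m}(\rho)$.

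The only point requiring genuine care is the bookkeeping in the commutator calculation: one has to verify that the terms of the form $p^{2l}A^2$, $p^{2m}B^2$, and $p^{l+m}BA$ coming from the expansions of $\rho(g)^{\pm 1}\rho(h)^{\pm 1}$ either cancel exactly or are absorbed into $p^{l+m+1}\M_n(\Z_p)$, so that the leading-order contribution modulo $p^{l+m+1}$ really is $p^{l+m}[A,B]$. This is where the usual $p$-adic filtration identity $[1+p^l A,1+p^m B]\equiv 1+p^{l+m}[A,B]\pmod{p^{l+m+1}}$ in $\op{GL}_n(\Z_p)$ is used, and it is the main (though entirely routine) step of the argument.
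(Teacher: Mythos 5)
Your proof is correct and follows essentially the same route as the paper: both reduce the claim to the group-theoretic commutator identity $(\mathrm{Id}+p^lA)(\mathrm{Id}+p^mB)(\mathrm{Id}+p^lA)^{-1}(\mathrm{Id}+p^mB)^{-1}\equiv \mathrm{Id}+p^{l+m}[A,B]\pmod{p^{l+m+1}}$ applied to lifts of elements of $\ker\rho_l$ and $\ker\rho_m$. Your exact identity $\rho(g)\rho(h)=\rho(h)\rho(g)+p^{l+m}[A,B]$ over $\Z_p$ is a slightly cleaner way to organize the bookkeeping than the paper's term-by-term expansion of the inverses in $\Z/p^{l+m+1}$, but the substance is identical.
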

     \begin{proof}
     Set $\mathcal{G}_k$ denote the kernel of the mod-$p^k$ map 
     \[\mathcal{G}_k:=\op{ker}\left\{ \op{GL}_n(\Z/p^{k+1}))\rightarrow\op{GL}_n(\Z/p^{k}) \right\}.\]Let $c\in \Phi_l(\rho)$ and $d\in \Phi_m(\rho)$, consider the elements $\op{Id}+ p^lc\in \mathcal{G}_l$ and $\op{Id}+ p^{m}d\in \mathcal{G}_m$. Let $\tilde{c}, \tilde{d}$ be such that $A=\op{Id}+p^l \tilde{c}\in \op{GL}_n(\Z/p^{l+m+1})$ and $B=\op{Id}+p^{m}\tilde{d}\in \op{GL}_n(\Z/p^{l+m+1})$ lift $\op{Id}+p^lc$ and $\op{Id}+p^{m}d$ respectively. Assume without loss of generality that $l\leq m$. Since we are working mod-$p^{l+m+1}$, it follows that $(p^l\tilde{c})^{m+2}=0$ and $(p^m \tilde{d})^3=0$.
     We have that
\[\begin{split}
    ABA^{-1}B^{-1}=&(\operatorname{Id}+p^l\tilde{c})(\operatorname{Id}+p^{m}\tilde{d})(\operatorname{Id}+p^l\tilde{c})^{-1}(\operatorname{Id}+p^{m}\tilde{d})^{-1}\\
    =&(\operatorname{Id}+p^m \tilde{d})^{-1}+(\operatorname{Id}+p^l\tilde{c})p^m \tilde{d}(\operatorname{Id}+p^l\tilde{c})^{-1}(1+p^m \tilde{d})^{-1}\\
      =&(\operatorname{Id}-p^m \tilde{d}+(p^m\tilde{d})^2)\\&+(\operatorname{Id}+p^l\tilde{c})p^m \tilde{d}(\operatorname{Id}-p^l\tilde{c}+\cdots+(-1)^{m+1}(p^l\tilde{c})^{m+1})(\operatorname{Id}-p^m\tilde{d}+(p^m\tilde{d})^2)\\
      =&(\operatorname{Id}-p^m \tilde{d}+(p^m\tilde{d})^2)+(\operatorname{Id}+p^l\tilde{c})p^m \tilde{d}(\operatorname{Id}-p^l\tilde{c})(\operatorname{Id}-p^m\tilde{d})\\
     =&(\operatorname{Id}-p^m \tilde{d}+(p^m\tilde{d})^2)+(\operatorname{Id}+p^l\tilde{c})p^m \tilde{d}(\operatorname{Id}-p^l\tilde{c})-(p^{m} \tilde{d})^2\\
      =&(\operatorname{Id}-p^m \tilde{d}+(p^m\tilde{d})^2)+p^m \tilde{d}+p^{m+l}[c,d] -(p^{m} \tilde{d})^2\\
      =&\operatorname{Id}+p^{m+l}[c,d].\\
\end{split}\]
     \end{proof}
     The following Lemma will be applied to show that the representations we construct contain a finite index subgroup of $\op{SL}_n(\Z_p)$. 
     \begin{Lemma}\label{lemma27}
     Let $\rho:\op{G}_{\Q,\{p\}}\rightarrow \op{GL}_n(\Z_p)$ be a continuous Galois representation lifting $\bar{\rho}$. Let $m\geq 1$ be such that $\Phi_m(\rho)$ contains $\g$. Then we have the following:
     \begin{enumerate}
         \item\label{lemma27p1} $\Phi_k(\rho)$ contains $\g$ for $k\geq m$,
         \item\label{lemma27p2} the image of $\rho$ contains $\mathcal{U}_m$.
     \end{enumerate}
     \end{Lemma}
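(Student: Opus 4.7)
The plan is to prove parts (1) and (2) in sequence, with (1) feeding directly into (2).

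For part (1), I would establish the monotonicity $\Phi_k(\rho)\subseteq \Phi_{k+1}(\rho)$ for every $k\geq 1$; combined with the hypothesis $\g\subseteq\Phi_m(\rho)$, this yields the claim by induction on $k\geq m$. To prove the inclusion, take $\sigma\in\op{ker}\rho_k$ with $\rho_{k+1}(\sigma)=\op{Id}+p^kX$, and consider $\sigma^p$. Expanding $\rho(\sigma)^p=(\op{Id}+A)^p$ binomially with $A=p^kX+O(p^{k+1})$, every term other than $\op{Id}$ and $pA\equiv p^{k+1}X\pmod{p^{k+2}}$ has $p$-adic valuation at least $k+2$, using $p\mid\binom{p}{j}$ for $1\leq j\leq p-1$ together with $p\geq 3$ and $k\geq 1$. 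Hence $\rho(\sigma^p)\equiv\op{Id}+p^{k+1}X\pmod{p^{k+2}}$, so $\sigma^p\in\op{ker}\rho_{k+1}$ and represents $X$ in $\Phi_{k+1}(\rho)$.

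For part (2), the image $\op{im}(\rho)$ is closed in $\op{GL}_n(\Z_p)$ (being the continuous image of a compact group) and therefore agrees with the inverse limit of the finite images $\op{im}(\rho_k)$. It thus suffices to show that for every $h\in\mathcal{U}_m$ and every $k\geq m$, the mod-$p^k$ reduction $h_k$ lies in $\op{im}(\rho_k)$. I would argue by induction on $k$, the base case $k=m$ being trivial since $h_m=\op{Id}$. For the inductive step, pick $\sigma\in\op{G}_{\Q,\{p\}}$ with $\rho_k(\sigma)=h_k$, form the lift $g:=\rho_{k+1}(\sigma)\in\op{im}(\rho_{k+1})$, and write $g^{-1}h_{k+1}=\op{Id}+p^kY$ in $\op{GL}_n(\Z/p^{k+1})$. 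It is enough to prove $Y\in\Phi_k(\rho)$, for then $g\cdot(\op{Id}+p^kY)=h_{k+1}$ lies in $\op{im}(\rho_{k+1})$.

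The main obstacle is that $Y$ need not a priori be trace-zero. Taking determinants gives $\op{tr}(Y)\equiv (1-\psi(\sigma))/p^k\pmod{p}$; since $\sigma\in\op{ker}\rho_k$, this quantity is, up to sign, of the form $(\psi(\tau)-1)/p^k\pmod p$ for some $\tau\in\op{ker}\rho_k$, which by definition is the image of the trace map $\op{tr}\colon \Phi_k(\rho)\to\F_p$. Thus there exists $Z\in\Phi_k(\rho)$ with $\op{tr}(Z)=\op{tr}(Y)$; combined with $Y-Z\in\g\subseteq\Phi_k(\rho)$ by part (1), this gives $Y=Z+(Y-Z)\in\Phi_k(\rho)$. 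Passing to the inverse limit produces the desired element $h\in\op{im}(\rho)$, completing the argument.
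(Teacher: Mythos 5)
Your argument for part (1) is correct, and it is actually a different (and more robust) route than the paper's: the paper deduces (1) from Lemma \ref{lemma26} together with $[\g,\g]=\g$, which directly yields $\g\subseteq\Phi_{km}(\rho)$ only for multiples of $m$, whereas your $p$-th power computation establishes the genuine monotonicity $\Phi_k(\rho)\subseteq\Phi_{k+1}(\rho)$, from which (1) is immediate. The valuation estimates you give are fine for $p\geq 3$ and $k\geq 1$.

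Part (2), however, has a genuine gap, precisely at the point you flag as "the main obstacle". You write "since $\sigma\in\op{ker}\rho_k$" --- but this is false for $k>m$: your $\sigma$ satisfies $\rho_k(\sigma)=h_k$, and since $h\in\mathcal{U}_m$ one only knows $h_m=\op{Id}$, hence $\sigma\in\op{ker}\rho_m$, not $\op{ker}\rho_k$. This matters. The image of the trace map on $\Phi_k(\rho)$ is $\{(\psi(\tau)-1)/p^k \bmod p:\tau\in\op{ker}\rho_k\}$, whereas your $\op{tr}(Y)=-(\psi(\sigma)-1)/p^k \bmod p$ is only known to come from an element of $\op{ker}\rho_m$ whose determinant happens to be $\equiv 1 \pmod{p^k}$; the former set can a priori be strictly smaller than the latter. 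Concretely, as $\sigma$ ranges over the coset $\{\sigma:\rho_k(\sigma)=h_k\}$ of $\op{ker}\rho_k$, the resulting values of $\op{tr}(Y)$ sweep out a coset of $\op{tr}(\Phi_k(\rho))$ in $\F_p$, and when $\op{tr}(\Phi_k(\rho))=0$ (i.e.\ $\Phi_k(\rho)=\g$ exactly, which the hypothesis permits) that coset is a single, possibly nonzero, element that no choice of $\sigma$ or of $Z$ can remove. So the step "there exists $Z\in\Phi_k(\rho)$ with $\op{tr}(Z)=\op{tr}(Y)$" is unjustified. To repair it you would need some additional control on the determinant character, e.g.\ to carry along in the induction the stronger statement that $\sigma$ can be chosen with $\psi(\sigma)\equiv 1\pmod{p^{k+1}}$ (which forces $\op{tr}(Y)=0$), and then verify that the correcting element $\tau\in\op{ker}\rho_k$ realizing $Y\in\g$ can itself be chosen with $\psi(\tau)\equiv 1\pmod{p^{k+2}}$ --- this is where the real work lies. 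An alternative that avoids the determinant entirely is to pass to the closure of the commutator subgroup of $\rho(\op{ker}\rho_m)$, which lands in $\op{SL}_n(\Z_p)$, where the graded pieces fill out $\g=\op{gr}_j(\mathcal{U}_{2m})$ exactly and the successive approximation closes up; but that argument only reaches $\mathcal{U}_{2m}$, not $\mathcal{U}_m$. For what it is worth, the paper's own proof of (2) ("we only need to check that $H_k$ contains $\g$") is silent on exactly this issue, so you have correctly located the delicate point even though your resolution of it does not go through as written.
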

     \begin{proof} It is easy to check that $[\g,\g]=\g$. Part $\eqref{lemma27p1}$ follows from Lemma $\ref{lemma26}$. Let $H$ be the image of $\rho$. Since $\rho$ is continuous and $\op{G}_{\Q,\{p\}}$ is compact, it follows that $H$ is closed. For $l\geq 1$, let $H_l$ be the projection of $H$ to $\op{GL}_n(\Z/p^l)$. Since $H$ is closed, we may identify it with the inverse limit $\varprojlim_l H_l$. Thus for part $\eqref{lemma27p2}$, we only need to check that $H_k$ contains $\g$ for $k\geq m$. This follows from part $\eqref{lemma27p1}$. 
     \end{proof}
     \begin{Lemma} \label{lemma29}
     Let $\rho:\op{G}_{\Q,\{p\}}\rightarrow \op{GL}_n(\Z_p)$ be a continuous Galois representation lifting $\bar{\rho}$. Assume that $\Phi_1(\rho)$ contains an element $w:=\sum_i a_i e_{i,i}$ such that $a_1,\dots, a_n$ are all distinct. Furthermore, assume that it contains $e_{i,j}$ for all tuples $(i,j)$ such that $(i+j)$ is odd. Then we have the following:
     \begin{enumerate}
         \item $\Phi_4(\rho)$ contains $\g$,
         \item the image of $\rho$ contains $\mathcal{U}_4$.
     \end{enumerate}
     \end{Lemma}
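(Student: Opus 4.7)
The plan is to prove the stronger inclusion $\Phi_2(\rho) \supseteq \g$, from which both conclusions follow by Lemma \ref{lemma27}: part (1) comes from \eqref{lemma27p1} applied with $m = 2$ and $k = 4$, and part (2) from \eqref{lemma27p2} together with $\mathcal{U}_2 \supseteq \mathcal{U}_4$. All the work is to exhibit a spanning set of $\g$ inside $\Phi_2(\rho)$, and Lemma \ref{lemma26} does the heavy lifting by placing every commutator $[c, d]$ with $c, d \in \Phi_1(\rho)$ into $\Phi_2(\rho)$.

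I would split the target into three families and realize each as a commutator of elements of $\Phi_1(\rho)$. For off-diagonal entries $e_{i,j}$ with $i + j$ odd, compute
\[[w, e_{i,j}] = (a_i - a_j)\, e_{i,j},\]
which lies in $\Phi_2(\rho)$ by Lemma \ref{lemma26}; since $a_1, \ldots, a_n$ are pairwise distinct in $\F_p$, the scalar $a_i - a_j$ is a unit and hence $e_{i,j} \in \Phi_2(\rho)$. For trace-zero diagonal matrices, if $i + j$ is odd then both $e_{i,j}$ and $e_{j,i}$ lie in $\Phi_1(\rho)$ by hypothesis, and $[e_{i,j}, e_{j,i}] = e_{i,i} - e_{j,j} \in \Phi_2(\rho)$; running this over the consecutive pairs $(i, i+1)$ and taking $\F_p$-linear combinations recovers every $e_{i,i} - e_{k,k}$, which spans the traceless diagonal subspace $\mathfrak{t} \cap \g$. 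Finally, for off-diagonal $e_{i,k}$ with $i + k$ even and $i \neq k$ (a case which is vacuous when $n = 2$), choose any index $j$ of parity opposite to $i$; this forces $j \neq i, k$ and makes both $i + j$ and $j + k$ odd, so the identity $[e_{i,j}, e_{j,k}] = e_{i,k}$ combined with Lemma \ref{lemma26} puts $e_{i,k}$ in $\Phi_2(\rho)$.

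Assembling the three families, $\Phi_2(\rho)$ contains every $e_{i,k}$ with $i \neq k$ together with every trace-zero diagonal matrix, hence all of $\g$, and the lemma follows. I do not anticipate a serious obstacle here: the entire argument is a direct bookkeeping exercise in elementary-matrix commutator identities, with Lemma \ref{lemma26} providing the filtration input. The only subtlety worth flagging is the parity accounting, which ensures that each of the three families of generators of $\g$ is realized as a bracket of elements already present in $\Phi_1(\rho)$; the distinctness of the $a_i$ enters at exactly one point, namely to invert $a_i - a_j$ in $\F_p$ for the first family.
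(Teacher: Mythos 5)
Your proof is correct, and it uses the same basic machinery as the paper (Lemma \ref{lemma26} for the filtration and Lemma \ref{lemma27} to conclude), but it organizes the computation differently and proves a strictly stronger intermediate statement. The paper, for $n>2$, only establishes $\g\subseteq\Phi_4(\rho)$: it first puts all off-diagonal $e_{i,j}$ into $\Phi_2(\rho)$, then pushes them up to $\Phi_3(\rho)$ and $\Phi_4(\rho)$ by repeatedly bracketing with $w\in\Phi_1(\rho)$, and only recovers the traceless diagonal at level $4$, via $[\Phi_2(\rho),\Phi_2(\rho)]\subseteq\Phi_4(\rho)$ and $[e_{i,j},e_{j,i}]=e_{i,i}-e_{j,j}$. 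You instead observe that consecutive index pairs $(i,i+1)$ always have odd sum, so $e_{i,i+1}$ and $e_{i+1,i}$ are already in $\Phi_1(\rho)$ by hypothesis and their bracket lands the diagonal generators $e_{i,i}-e_{i+1,i+1}$ in $\Phi_2(\rho)$ directly; combined with your other two families this gives $\g\subseteq\Phi_2(\rho)$, from which both conclusions follow at once from Lemma \ref{lemma27}. This is essentially the trick the paper already uses in its $n=2$ case, extended uniformly to all $n$; it buys a shorter argument, eliminates the two extra ``iteration'' steps, and yields the sharper fact that the image contains $\mathcal{U}_2$ rather than merely $\mathcal{U}_4$ (which costs nothing here since the lemma only asserts the weaker containment). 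Your parity bookkeeping for the third family is also sound: when $i+k$ is even with $i\neq k$ one has $n\geq 3$, and any $j$ of opposite parity to $i$ is automatically distinct from both $i$ and $k$, so $[e_{i,j},e_{j,k}]=e_{i,k}$ applies.
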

     \begin{proof}
     First consider the case $n=2$. Lemma $\ref{lemma26}$ asserts that $[\Phi_1(\rho), \Phi_1(\rho)]$ is contained in $\Phi_2(\rho)$. The relations $[w, e_{1,2}]=(a_1-a_2) e_{1,2}$ and $[w, e_{2,1}]=(a_2-a_1) e_{2,1}$ imply that $e_{1,2}$ and $e_{2,1}$ are contained in $\Phi_2(\rho)$. The relation $[e_{1,2},e_{2,1}]=2(e_{1,1}-e_{2,2})$ implies that $e_{1,1}-e_{2,2}$ is also contained in $\Phi_2(\rho)$. Thus $\Phi_2(\rho)$ contains $\g$ and the conclusion follows from Lemma $\ref{lemma27}$.
     \par Consider the case $n>2$. Let $(i,j)$ be a tuple with $i\neq j$ and $i+j$ even. Since $n\geq 3$, we can pick $l$ such that $l+i$ and $l+j$ are both odd. The relation $e_{i,j}=[e_{i,l}, e_{l, j}]$ implies that $\Phi_2(\rho)$ contains $e_{i,j}$. Let $(i,j)$ be a pair with $i\neq j$ and $i+j$ is odd. The relation $[e_{i,j},w]=(a_j-a_i) e_{i,j}$ implies that $e_{i,j}$ is contained in $\Phi_2(\rho)$.
     \par 
     Since $[\Phi_1(\rho), \Phi_2(\rho)]$ is contained in $\Phi_3(\rho)$, the relation $[w,e_{i,j}]=(a_i-a_j)e_{i,j}$ implies that $\Phi_3(\rho)$ contains all $e_{i,j}$, where $(i,j)$ runs through pairs such that $i\neq j$. One more iteration of the same tells us that $\Phi_4(\rho)$ contains all $e_{i,j}$ where $(i,j)$ runs through pairs such that $i\neq j$. Next, we note that $[\Phi_2(\rho), \Phi_2(\rho)]$ is contained in $\Phi_4(\rho)$. The relation $[e_{i,j}, e_{j,i}]=e_{i,i}-e_{j,j}$ implies that all elements $e_{i,j}-e_{j,j}\in \mathfrak{t}$ are contained in $\Phi_4(\rho)$. We have thus shown that $\Phi_4(\rho)$ contains $\g$. The conclusion follows from Lemma $\ref{lemma27}$.
     \end{proof}
     \section{Proof of Main Results}
     \par Recall that $\bar{\rho}$ is the representation
\[\bar{\rho}=\left( {\begin{array}{cccc}
   \bar{\chi}^{k_1} & & & \\
    & \ddots& & \\
    & & \bar{\chi}^{k_{n-1}}& \\
    & & & \bar{\chi}^{k_{n}}
  \end{array} } \right):\op{G}_{\Q,\{p\}}\rightarrow \op{GL}_n(\F_p).\]
  \par Let $A$ be the Class group of $\Q(\mu_p)$ and let $\mathcal{C}$ denote the mod-$p$ class group $\mathcal{C}:=A\otimes \F_p$. The Galois group $\op{Gal}(\Q(\mu_p)/\Q)$ acts on $\mathcal{C}$ via the natural action. Since the order of $\op{Gal}(\Q(\mu_p)/\Q)$ is prime to $p$, it follows that $\mathcal{C}$ decomposes into eigenspaces 
  \[\mathcal{C}=\bigoplus_{i=0}^{p-2} \mathcal{C}(\bar{\chi}^i),\]
  where $\mathcal{C}(\bar{\chi}^i)=\{x\in \mathcal{C} \mid g\cdot x=\bar{\chi}^{i}(g) x \}$.
  \begin{Def}\label{index}
  The \textit{index of regularity} $e_p$ is the number of eigenspaces $\mathcal{C}(\bar{\chi}^i)$ which are non-zero.
  \end{Def}Note that Vandiver's conjecture predicts that $\mathcal{C}(\bar{\chi}^i)=0$ for $i$ even (cf. \cite[Chapter 8]{washington}). For a $\op{G}_{\Q,\{S\}}$-module $M$, which is a finite dimensional $\F_p$-vector space, we denote by $\Sh^i_{\{p\}}(M)$, the kernel of the restriction map 
  \[\Sh^i_{\{p\}}(M):=\op{ker} \left( H^i(\op{G}_{\Q,\{p\}}, M)\rightarrow H^i(\op{G}_{\Q_p}, M)\right).\] Let $M^*:=\op{Hom}(M, \mu_p)$, where $\mu_p$ is the Galois module of $p$-th roots of unity. Note that $\mu_p\simeq \F_p(\bar{\chi})$. Global duality for $\Sh$-groups states that there is a natural isomorphism $\Sh^2_{\{p\}}(M)\simeq \Sh^1_{\{p\}}(M^*)^{\vee}$.
  \begin{Lemma}\label{lemma31}
  For $0\leq i\leq p-2$,
  \begin{enumerate}
      \item the group $\Sh^1_{\{p\}}(\F_p(\bar{\chi}^i))$ is zero if $\mathcal{C}(\bar{\chi}^i)$ is zero,
      \item the group $\Sh^2_{\{p\}}(\F_p(\bar{\chi}^i))$ is zero if $\mathcal{C}(\bar{\chi}^{p-i})$ is zero.
  \end{enumerate}
  \end{Lemma}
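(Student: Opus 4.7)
The plan is to derive (2) from (1) via the global duality isomorphism $\Sh^2_{\{p\}}(M)\simeq \Sh^1_{\{p\}}(M^*)^{\vee}$ recorded just above the lemma. With $M=\F_p(\bar{\chi}^i)$, a short calculation using $\mu_p\simeq \F_p(\bar{\chi})$ gives $M^*\simeq \F_p(\bar{\chi}^{1-i})=\F_p(\bar{\chi}^{p-i})$, where the last equality uses that $\bar{\chi}$ has order dividing $p-1$. So the hypothesis $\mathcal{C}(\bar{\chi}^{p-i})=0$ of part (2) is precisely what part (1) requires, with $i$ replaced by $p-i$, in order to conclude $\Sh^1_{\{p\}}(M^*)=0$, and hence $\Sh^2_{\{p\}}(M)=0$. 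The substance of the lemma therefore lies in part (1).

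For part (1), I would pass to the splitting field $K:=\Q(\mu_p)$ of the coefficient module, over which $\F_p(\bar{\chi}^i)$ becomes Galois-trivial. Let $\Gamma:=\op{Gal}(K/\Q)$. Since $|\Gamma|=p-1$ is coprime to $p$, the cohomology of $\Gamma$ with values in any $\F_p$-vector space vanishes in positive degree, so the inflation-restriction sequence associated to $1\to \op{G}_{K,\{p\}}\to \op{G}_{\Q,\{p\}}\to \Gamma\to 1$ collapses to an isomorphism
\[H^1(\op{G}_{\Q,\{p\}},\F_p(\bar{\chi}^i))\xrightarrow{\sim} H^1(\op{G}_{K,\{p\}},\F_p)^{(\bar{\chi}^i)},\]
where the superscript denotes the appropriate isotypic component under the induced $\Gamma$-action on $H^1(\op{G}_{K,\{p\}},\F_p)$. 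Compatibility of inflation-restriction with localization then identifies $\Sh^1_{\{p\}}(\F_p(\bar{\chi}^i))$ inside the right-hand side with those characters of $\op{G}_{K,\{p\}}$ whose restriction to every decomposition group at a prime of $K$ above $p$ is trivial; in particular such characters are unramified at primes above $p$, and unramified elsewhere since they factor through $\op{G}_{K,\{p\}}$.

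Class field theory identifies the everywhere-unramified quotient of $\op{G}_K^{\op{ab}}\otimes \F_p$ with the mod-$p$ class group $\mathcal{C}=A\otimes\F_p$, and the eigenspace decomposition $\mathcal{C}=\bigoplus_j \mathcal{C}(\bar{\chi}^j)$ pins down the $\bar{\chi}^i$-isotypic part of $\op{Hom}(\mathcal{C},\F_p)$ as $\op{Hom}(\mathcal{C}(\bar{\chi}^i),\F_p)$. Putting everything together one obtains
\[\Sh^1_{\{p\}}(\F_p(\bar{\chi}^i))\simeq \op{Hom}(\mathcal{C}(\bar{\chi}^i),\F_p),\]
which vanishes whenever $\mathcal{C}(\bar{\chi}^i)=0$. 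The only real care required is tracking how the $\bar{\chi}^i$-twist on the coefficients interacts with the $\Gamma$-action on $H^1$ and on $\mathcal{C}$, so that the correct eigenspace is singled out at each step; once the bookkeeping is in place the remaining identifications are formal consequences of Hochschild-Serre and unramified class field theory.
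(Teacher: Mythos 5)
Your argument is correct and follows essentially the same route as the paper: part (1) via inflation--restriction to $\Q(\mu_p)$ (using that $[\Q(\mu_p):\Q]$ is prime to $p$) together with the class field theory identification of everywhere-unramified $\F_p$-characters of $\op{G}_{\Q(\mu_p)}$ with $\op{Hom}(\mathcal{C},\F_p)$, and part (2) from part (1) by the global duality isomorphism $\Sh^2_{\{p\}}(M)\simeq \Sh^1_{\{p\}}(M^*)^{\vee}$ with $M^*\simeq\F_p(\bar{\chi}^{p-i})$. Your explicit attention to the eigenspace bookkeeping is a slightly more careful rendering of the same identification the paper makes via $\Gamma$-equivariant homomorphisms into the twisted module.
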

  \begin{proof}
  Let $L$ be the subfield of the Hilbert Class field of $\Q(\mu_p)$ such that $\op{Gal}(L/\Q(\mu_p))$ is isomorphic to $\mathcal{C}$. Since the order of $\op{Gal}(\Q(\mu_p)/\Q)$ is prime to $p$, it follows that $H^j(\op{Gal}(\Q(\mu_p)/\Q), \F_p(\bar{\chi}^i))=0$ for $j=1,2$. It follows that the restriction map $H^1(\op{G}_{\Q},\F_p(\bar{\chi}^i))\rightarrow  \op{Hom}(\op{G}_{\Q(\mu_p)}, \F_p(\bar{\chi}^i))^{\op{Gal}(\Q(\mu_p)/\Q)}$ is an isomorphism. Via this isomorphism $\Sh^1_{\{p\}}(\F_p(\bar{\chi}^i))$ consists homomorphisms $\op{Hom}(\op{Gal}(L/\Q(\mu_p)), \F_p(\bar{\chi}^i))$ that are unramified outside $\{p\}$ and trivial when restricted to the prime of $\Q(\mu_p)$ above $p$. The conclusion of the first part follows. The second part follows from the first part and global duality.
  \end{proof}
  
  \begin{Th}\label{main2}
Let $k_1,\dots, k_n$ and $\bar{\rho}$ be as above. Assume that the following are satisfied:
\begin{enumerate}
\item $0<k_i <\frac{p-1}{2}$,
\item $k_i$ is odd for $i$ even and even for $i$ odd,
\item $\bar{\chi}^{k_i-k_j}$ is not equal to $\bar{\chi}$.
    \item The characters $\bar{\chi}^{k_i-k_j}$ for $i\neq j$ are all distinct.
    \item For $(i,j)$ such that $i\neq j$, we have that $\mathcal{C}(\bar{\chi}^{p-(k_i-k_j)})=0$.
\end{enumerate}
Then there exists a continuous lift $\rho:\op{G}_{\Q,\{p\}}\rightarrow \op{GL}_n(\Z_p)$ of $\bar{\rho}$ such that the image of $\rho$ contains $\mathcal{U}_4$.
  \end{Th}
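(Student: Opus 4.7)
The plan is to implement the three-step strategy laid out at the end of Section 2. First I would verify that the global deformation problem is unobstructed. Next I would produce a specific mod-$p^2$ lift $\rho_2$ whose associated submodule $\Phi_1(\rho_2) \subseteq \g$ satisfies the hypotheses of Lemma \ref{lemma29}. Finally, I would invoke Lemma \ref{lemma25} to promote $\rho_2$ to a characteristic zero deformation $\rho$ and apply Lemma \ref{lemma29} to conclude that the image of $\rho$ contains $\mathcal{U}_4$.

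For the unobstructedness step I would work with the Galois-equivariant decomposition
\[
\g = \mathfrak{t} \oplus \bigoplus_{i \neq j} \F_p(\bar{\chi}^{k_i - k_j})
\]
and check that $H^2(\op{G}_{\Q,\{p\}}, -)$ vanishes on each summand. On an off-diagonal piece $M = \F_p(\bar{\chi}^{k_i - k_j})$, hypothesis (3) forces $\bar{\chi}^{k_i - k_j} \neq \bar{\chi}$, so local Tate duality at $p$ gives $H^2(\op{G}_{\Q_p}, M) = 0$, and $H^2(\op{G}_\R, M) = 0$ automatically because $p$ is odd; the Poitou--Tate sequence then identifies $H^2(\op{G}_{\Q,\{p\}}, M)$ with $\Sh^2_{\{p\}}(M)$, which is killed by Lemma \ref{lemma31}(2) combined with hypothesis (5). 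On the trivial torus summand I would cite the classical fact that the only $\Z/p$-extension of $\Q$ unramified outside $p$ is the degree-$p$ subfield of $\Q(\mu_{p^2})$, so $\dim H^1(\op{G}_{\Q,\{p\}}, \F_p) = 1$, and Tate's global Euler characteristic formula then forces $H^2(\op{G}_{\Q,\{p\}}, \F_p) = 0$.

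With $H^2(\op{G}_{\Q,\{p\}}, \g) = 0$ in hand, the set of deformations of $\bar{\rho}$ to $\Z/p^2$ with determinant $\psi_2$ forms a nonempty torsor under $H^1(\op{G}_{\Q,\{p\}}, \g)$. Hypothesis (2) implies that $k_i - k_j$ is odd exactly when $i + j$ is odd, and running Tate's Euler characteristic formula at the archimedean place gives $\dim H^1(\op{G}_{\Q,\{p\}}, \F_p(\bar{\chi}^{k_i - k_j})) = 1$ in precisely that case. I would pick a nonzero class in each such off-diagonal summand together with a torus class of the form $\psi_0 \otimes w$, where $\psi_0$ generates $H^1(\op{G}_{\Q,\{p\}}, \F_p)$ and $w = \sum_i a_i e_{i,i} \in \mathfrak{t}$ is chosen to have pairwise distinct entries $a_i \in \F_p$ (possible whenever $p \geq n$). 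Twisting the Teichm\"uller diagonal lift of $\bar{\rho}$ by the sum of these cocycles yields $\rho_2$. Since the index $[\op{G}_{\Q,\{p\}} : \ker \bar{\rho}]$ is prime to $p$, inflation--restriction ensures that each of the chosen classes restricts to a nonzero homomorphism on $\ker \bar{\rho}$, so $\Phi_1(\rho_2) = \rho_2(\ker \bar{\rho}) \subseteq \g$ contains $w$ and every $e_{i,j}$ with $i + j$ odd.

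Lemma \ref{lemma25} then lifts $\rho_2$ to a continuous $\rho: \op{G}_{\Q,\{p\}} \to \op{GL}_n(\Z_p)$; because $\Phi_1(\rho)$ depends only on $\rho \bmod p^2$, Lemma \ref{lemma29} delivers that the image of $\rho$ contains $\mathcal{U}_4$. The main obstacle will be the unobstructedness verification, since hypothesis (5) enters only there and is essential for the identification $H^2 = \Sh^2$ via Poitou--Tate, without which the step-by-step lifting of Lemma \ref{lemma25} breaks down. By contrast, the subsequent cocycle selection is essentially a dimension count, cleanly separated character-by-character thanks to the multiplicity-freeness guaranteed by hypothesis (4).
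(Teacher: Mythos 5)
Your proposal is correct and follows the paper's overall strategy exactly: prove unobstructedness summand by summand via local and global duality together with Lemma \ref{lemma31}, twist the diagonal mod-$p^2$ lift by one cocycle $f_{i,j}$ for each odd pair $(i,j)$, lift to characteristic zero by Lemma \ref{lemma25}, and conclude via Lemma \ref{lemma29}. There are two local differences worth recording. First, on the torus summand the paper argues through $\Sh^2_{\{p\}}(\mathfrak{t})\simeq \Sh^1_{\{p\}}(\mathfrak{t}^*)^{\vee}$ and the vanishing of $\mathcal{C}(\bar{\chi})$ (Washington, Prop.\ 6.16), whereas you first kill the local $H^2$ and then get $H^2(\op{G}_{\Q,\{p\}},\F_p)=0$ from $\dim H^1(\op{G}_{\Q,\{p\}},\F_p)=1$ plus the Euler characteristic; both are valid, and yours trades a class-group input for the Kronecker--Weber description of $\op{Hom}(\op{G}_{\Q,\{p\}},\F_p)$. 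Second, and more substantively, you obtain the regular diagonal element $w$ by adding an extra twist $\psi_0\otimes w$ with $w\in\mathfrak{t}$ trace-zero with distinct entries, starting from the Teichm\"uller diagonal lift; the paper instead takes the diagonal lift $\op{diag}(\chi_2^{k_1},\dots,\chi_2^{k_n})$ and reads off $w=\sum_i k_i e_{i,i}$ from $\rho_2(\gamma)$ for $\gamma$ with $\chi(\gamma)=1+p$, the entries being distinct because the $k_i$ are. Your mechanism costs one more cocycle but keeps the determinant literally equal to $\psi_2=(\tilde{\tau})_2$ throughout (the paper's $\chi_2^{k_i}$ lift does not have Teichm\"uller determinant, and its $w$ is not trace-zero), so your variant is arguably cleaner with respect to the fixed-determinant convention; just make sure your $n$ distinct entries can be chosen to sum to zero, which the size of $p$ guaranteed by hypothesis (1) easily permits.
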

  \begin{proof}
  First, we exhibit a characteristic zero lift of $\bar{\rho}$ which is unramified outside $\{p\}$. We show that the unramified outside $\{p\}$ deformation problem is unobstructed, i.e., $H^2(\op{G}_{\Q,\{p\}}, \g)=0$. Note that $H^2(\op{G}_{\Q_p}, \g)$ decomposes into
  \[H^2(\op{G}_{\Q_p}, \g)=H^2(\op{G}_{\Q_p}, \mathfrak{t})\oplus \left(\bigoplus_{(i,j)} H^2(\op{G}_{\Q_p}, \F_q(\bar{\chi}^{k_i-k_j}))\right),\]
  where $(i,j)$ runs through pairs for which $i\neq j$.
  By local duality, we have that
  \[H^2(\op{G}_{\Q_p}, \mathfrak{t})\simeq H^0(\op{G}_{\Q_p}, \mathfrak{t}^*)^{\vee}, \text{ and }H^2(\op{G}_{\Q_p}, \F_q(\bar{\chi}^{k_i-k_j}))\simeq H^0(\op{G}_{\Q_p}, \F_q(\bar{\chi}^{p-(k_i-k_j)}))^{\vee}.\]
  By assumption, $\bar{\chi}^{k_i-k_j}\neq \bar{\chi}$. As a result, we have that $H^0(\op{G}_{\Q_p}, \F_q(\bar{\chi}^{p-(k_i-k_j)}))=0$. On the other hand, the Galois action on $\mathfrak{t}$ is trivial and the dual acquires a twist by $\bar{\chi}$, hence, $H^0(\op{G}_{\Q_p}, \mathfrak{t}^*)=0$. Thus, the local cohomology group $H^2(\op{G}_{\Q_p}, \g)$ is zero and hence,
  \[H^2(\op{G}_{\Q,\{p\}}, \g)=\Sh^2_{\{p\}}(\g).\]
  By global duality, we have that
  \[\Sh^2_{\{p\}}( \mathfrak{t})\simeq \Sh^1_{\{p\}}( \mathfrak{t}^*)^{\vee}.\]
  It is a well known fact that $\mathcal{C}(\bar{\chi})$ is zero (cf. Proposition 6.16 of \cite{washington}). It follows (from Lemma $\ref{lemma31}$) that $\Sh^1_{\{p\}}(\F_p(\bar{\chi}))$ is zero, and thus, $\Sh^1_{\{p\}}(\mathfrak{t}^*)$ is zero. By assumption, $\mathcal{C}(\bar{\chi}^{p-(k_i-k_j)})$ is zero, and hence, by Lemma $\ref{lemma31}$, \[\Sh^2_{\{p\}} (\F_q(\bar{\chi}^{k_i-k_j}))=0.\] It has thus been shown that $H^2(\op{G}_{\Q,\{p\}}, \g)=0$.
  \par Recall that $\chi_2$ is $\chi\mod{p^2}$, let $\rho_2'$ be the lift\[\rho_2'=\left( {\begin{array}{cccc}
   \chi_2^{k_1} & & & \\
    & \ddots& & \\
    & & \chi_2^{k_{n-1}}& \\
    & & & \chi_2^{k_{n}}
  \end{array} } \right):\op{G}_{\Q,\{p\}}\rightarrow \op{GL}_n(\Z/p^2).\] Let $(i,j)$ be a pair such that $i+j$ is odd. Since $H^2(\op{G}_{\Q,\{p\}}, \F_p(\bar{\chi}^{k_i-k_j}))=0$ and $H^0(\op{G}_{\Q_{\infty}}, \F_p(\bar{\chi}^{k_i-k_j}))=0$, it follows from the Global Euler characteristic formula (see \cite[Theorem 8.7.4]{NSW}) that $H^1(\op{G}_{\Q,\{p\}}, \F_p(\bar{\chi}^{k_i-k_j}))$ is one-dimensional. Let $f_{i,j}$ be a generator of $H^1(\op{G}_{\Q,\{p\}}, \F_p(\bar{\chi}^{k_i-k_j}))$. Let $F\in H^1((\op{G}_{\Q,\{p\}}, \g)$ be the sum of all $f_{i,j}$ where $(i,j)$ ranges over all $i\neq j$ such that $i+j$ is odd. Let $\rho_2$ be the twist $(\op{Id}+p F)\rho_2':\op{G}_{\Q,\{p\}}\rightarrow \op{GL}_n(\Z/p^2)$. Since $H^2(\op{G}_{\Q,\{p\}}, \g)=0$, it follows from Lemma $\ref{lemma25}$ that $\rho_2$ lifts to a characteristic zero Galois representation $\rho:\op{G}_{\Q,\{p\}}\rightarrow \op{GL}_n(\Z_p)$.
  \par In order to show that the image of $\rho$ contains $\mathcal{U}_4$, it suffices (by Lemma $\ref{lemma29}$) to show that $\Phi_1(\rho)$ contains:
  \begin{itemize}\item $e_{i,j}$ for all tuples $(i,j)$ such that $i+j$ is odd,
  \item an element $w=\sum a_i e_{i,i}$ in $\mathfrak{t}$ such that the $a_i$ are distinct.
  \end{itemize}The image of $\bar{\rho}$ is prime to $p$ and as a result, any Galois-submodule $M$ of $\g$ decomposes into
  \[M=M_1\oplus \left(\bigoplus_{(i,j)} M_{\bar{\chi}^{k_i-k_j}}\right)\]where $M_{\bar{\chi}^{k_i-k_j}}$ is the submodule 
  \[M_{\bar{\chi}^{k_i-k_j}}:=\{x\in M\mid g\cdot x=\bar{\chi}^{k_i-k_j}(g)x\}\]
  and $M_1$ the $\op{G}_{\Q}$-invariant submodule. Note that it is assumed that all characters $\bar{\chi}^{k_i-k_j}$ are distinct for $i<j$. It follows from the bounds on $k_i$ that all characters $\bar{\chi}^{k_i-k_j}$ are distinct for all tuples $(i,j)$ with $i\neq j$. It is also clear that none of these is the trivial character. As a result, the above decomposition makes sense and $M_{\bar{\chi}^{k_i-k_j}}$, if non-zero, is the one-dimensional space generated by $e_{i,j}$. Since the order of $\Q(\mu_p)$ over $\Q$ is prime to $p$, it follows that
  \[H^1(\op{Gal}(\Q(\mu_p)/\Q), \F_p(\bar{\chi}^{k_i-k_j}))=0.\]It follows from the inflation restriction sequence that the restriction of $f_{i,j}$ to $\op{G}_{\Q(\mu_p)}$ is non-zero. Hence, there exists $g\in \op{ker} \bar{\rho}$ such that $f_{i,j}(g)\neq 0$. The element $\rho_2(g)\in \Phi_1(\rho)$ has non-zero $e_{i,j}$-component. It follows from the decomposition 
  \[\Phi_1(\rho)=\Phi_1(\rho)^{\op{G}_{\Q}}\oplus \left(\bigoplus_{(i,j)} \Phi_1(\rho)_{\bar{\chi}^{k_i-k_j}}\right)\]
  that $e_{i,j}\in \Phi_1(\rho)$ for all tuples $(i,j)$ for which $i+j$ is odd. Note that the cyclotomic character $\chi$ induces an isomorphism 
  \[\chi:\op{Gal}(\Q(\mu_{p^{\infty}})/\Q(\mu_p))\xrightarrow{\sim} 1+p\Z_p.\]Let $\gamma\in \op{G}_{\Q(\mu_p)}$ be chosen such that $\chi(\gamma)=1+p$. With respect to the identification of $1+pX\in \Phi_1(\rho)$ with $X\in \g$, the element $\rho_2(\gamma)$ coincides with $w:=\sum k_i e_{i,i}$. We have thus shown that $\Phi_1(\rho)$ satisfies the required conditions, and this completes the proof.
  
  \end{proof}
  
  \begin{proof}(proof of Theorem $\ref{main}$)
  Consider the $t:=n+2e$ numbers $m_1,\dots, m_t$, where $m_j=2^{j+1}+\epsilon_j$ and \[\epsilon_j:=\begin{cases}
  0\text{ if }j \text{ is odd.}\\
  1\text{ if }j \text{ is even.}
  \end{cases}\]
  Note that $4= m_1<m_2<\dots< m_t< \frac{p-1}{2}$. Suppose that $(i,j)$ and $(k,l)$ are such that $i\neq j$, $k\neq l$ and 
  \[m_i-m_j\equiv m_k-m_l\mod{p-1}.\]Then we show that $i=k$ and $j=l$. Since $|m_i-m_j| $ and $|m_k-m_l|$ are less than $\frac{p-1}{2}$, we have that $m_i-m_j= m_k-m_l$. Assume without loss of generality that $i>j$, and thus $m_i-m_j>0$. This implies that $m_k>m_l$ and thus $k>l$. We have that 
  \[2^{i+1}-2^{j+1}=2^{k+1}-2^{l+1}+\alpha,\]where $-2\leq \alpha\leq 2$. Since $i,j,k,l\geq 1$, we deduce that $4$ divides $\alpha$, and thus $\alpha=0$. It thus suffices to show that $i=k$. Suppose not, assume without loss of generality that $i>k$. Then we have 
  \[2^{i+1}=2^{j+1}+2^{k+1}-2^{l+1}\leq 2^{i}+2^{i}-2^{l+1}<2^{i+1}.\]Thus, it follows that $(i,j)=(k,l)$. It follows that the characters $\bar{\chi}^{p-(m_i-m_j)}$ are all distinct as $(i,j)$ ranges over all tuples such that $i\neq j$. Let $\mathcal{S}_i$ be the set of characters $\bar{\chi}^{p-(m_i-m_j)}$ as $j$ ranges from $1$ to $t$ such that $j\neq i$. Since the index of regularity $e_p$ is less than or equal to $e$, it follows that there is a subset $\{i_1,\dots, i_{n+e}\}$ of $\{1,\dots, t\}$ such that for each character $\beta \in \bigcup \mathcal{S}_{i_j}$, the eigenspace $\mathcal{C}(\beta)=0$. There is a subset of $n$ numbers $\{k_1,\dots, k_n\}$ of $\{i_1,\dots, i_{n+e}\}$ such that $k_i$ is odd if $i$ is odd and even if $i$ is even. Set $a_i$ to be equal to $m_{k_i}$. Note that $a_i$ is even for $i$ odd and odd for $i$ even. Moreover, the characters $\bar{\chi}^{p-(a_i-a_j)}$ are all distinct and $\mathcal{C}(\bar{\chi}^{p-(a_i-a_j)})=0$. It is clear from the definition of the original sequence $\{m_i\}$ that $a_i-a_j$ is not equal to $1$. The result follows from Theorem \ref{main2}. In fact, there are infinitely many Galois representations since there are infinitely many choices of \[\psi:\op{G}_{\Q,\{p\}}\rightarrow \op{GL}_1(\Z_p)\] such that $\psi_2=(\tilde{\tau})_2$.
  \end{proof}

\end{document}